\pdfoutput=1
\documentclass[11pt,reqno]{amsart}
\usepackage[paperheight=279mm,paperwidth=18cm,textheight=26cm,textwidth=14cm,includehead]{geometry}
\usepackage[T1]{fontenc}
\usepackage[utf8]{inputenc}
\usepackage[unicode]{hyperref}
\hypersetup{
bookmarks=true,
colorlinks=true,
citecolor=[rgb]{0,0,0.5},
linkcolor=[rgb]{0,0,0.5},
urlcolor=[rgb]{0,0,0.75},
pdfpagemode=UseNone,
pdfstartview=FitH,
pdfdisplaydoctitle=true,
pdftitle={Variational estimates for martingale paraproducts},
pdfauthor={Kova\v{c}, Zorin-Kranich},
pdflang=en-US
}

\usepackage[style=alphabetic,maxalphanames=4]{biblatex}
\addbibresource{varestparaprod.bib}

\usepackage{amssymb}
\usepackage{amsmath}
\usepackage{amsthm}
\usepackage{amsfonts}
\usepackage{mathtools}
\usepackage{enumerate}

\newtheorem{theorem}{Theorem}[section]
\newtheorem{lemma}[theorem]{Lemma}
\newtheorem{corollary}[theorem]{Corollary}
\newtheorem{proposition}[theorem]{Proposition}

\numberwithin{equation}{section}
\def\LL{\textup{L}}
\newcommand{\dif}{\mathop{}\!\mathrm{d}} 
\DeclarePairedDelimiter\abs{\lvert}{\rvert}

\DeclarePairedDelimiter\norm{\lVert}{\rVert}

\providecommand\given{}
\newcommand\SetSymbol[1][]{%
\nonscript\:#1\vert
\allowbreak
\nonscript\:
\mathopen{}}
\DeclarePairedDelimiterX\Set[1]\{\}{\renewcommand\given{\SetSymbol[\delimsize]}#1}
\DeclarePairedDelimiterXPP\EE[1]{\mathbb{E}}{\lparen}{\rparen}{}{\renewcommand\given{\SetSymbol[\delimsize]}#1} 

\newcommand{\calF}{\mathcal{F}}
\newcommand\cadlag{c\`{a}dl\`{a}g}

\begin{document}

\title[Variational estimates for martingale paraproducts]{Variational estimates\\ for martingale paraproducts}

\author[V. Kova\v{c}]{Vjekoslav Kova\v{c}}
\address[Vjekoslav Kova\v{c}]{Department of Mathematics, Faculty of Science, University of Zagreb, Bijeni\v{c}ka cesta 30, 10000 Zagreb, Croatia}
\email{vjekovac@math.hr}

\author[P. Zorin-Kranich]{Pavel Zorin-Kranich}
\address[Pavel Zorin-Kranich]{Mathematical Institute, University of Bonn, Endenicher Allee 60, D-53115 Bonn, Germany}
\email{pzorin@math.uni-bonn.de}

\date{\today}

\subjclass[2010]{Primary 60G42; 
Secondary 60G44} 

\begin{abstract}
We show that bilinear variational estimates of Do, Muscalu, and Thiele \cite{MR2949622} remain valid for a pair of general martingales with respect to the same filtration.
Our result can also be viewed as an off-diagonal generalization of the Burkholder--Davis--Gundy inequality for martingale rough paths by Chevyrev and Friz \cite{MR3909973}.
\end{abstract}

\maketitle

\section{Introduction}
If $f=(f_n)_{n=0}^{\infty}$ is a discrete-time real-valued martingale with respect to a filtration $\calF=(\calF_n)_{n=0}^{\infty}$, then \emph{L\'{e}pingle's variational inequality} \cite{MR0420837} claims
\begin{equation}\label{eq:1lepingle}
\norm[\Bigg]{ \sup_{\substack{m\in\mathbb{N}\\ n_0<n_1<\cdots<n_m}} \bigg( \sum_{k=1}^{m} \abs{f_{n_{k}}-f_{n_{k-1}}}^\varrho \bigg)^{1/\varrho} }_{\LL^p}
\leq C_{p,\varrho} \norm{f}_{\LL^p}
\end{equation}
for any exponents $p\in(1,\infty)$ and $\varrho\in(2,\infty)$. Here for any random variable $h$ we write $\norm{h}_{\LL^p}:=(\mathbb{E}\abs{h}^p)^{1/p}$ and for a martingale $f$ we set
\begin{equation}\label{eq:martLpnorm}
\norm{f}_{\LL^p}:=\sup_{n\geq0}\norm{f_n}_{\LL^p}.
\end{equation}
Inequality \eqref{eq:1lepingle} fails at the endpoint $\varrho=2$ and then the corresponding result is \emph{Bourgain's jump inequality} \cite{MR1019960},
\begin{equation}\label{eq:1jump}
\norm[\Bigg]{ \bigg( \sup_{\substack{m\in\mathbb{N}\\ n_0<n_1<\cdots<n_m}} \operatorname{card} \Set[\Big]{ k\in\Set{1,\ldots,m} \given \abs[\big]{f_{n_{k}}-f_{n_{k-1}}} \geq \lambda } \bigg)^{1/2} }_{\LL^p}
\leq C_p \lambda^{-1} \norm{f}_{\LL^p}
\end{equation}
for any exponent $p\in(1,\infty)$ and any threshold $\lambda\in(0,\infty)$.
The supremum of cardinalities on the left hand side of \eqref{eq:1jump} is usually called the \emph{$\lambda$-jump counting function} and denoted by $N_{\lambda}(f)$, so that \eqref{eq:1jump} can be rewritten more elegantly as
\[ \norm[\big]{N_{\lambda}(f)^{1/2}}_{\LL^p} \leq C_p \lambda^{-1} \norm{f}_{\LL^p}. \]
Inequalities \eqref{eq:1lepingle} and \eqref{eq:1jump} provide quantitative refinements of the martingale convergence theorem, at least for martingales that are bounded in the $\LL^p$-norm. The reader can consult the paper \cite{arxiv:1808.04592} for their elegant proofs and Banach-space-valued generalizations.

Let us now take two martingales, $f=(f_n)_{n=0}^{\infty}$ and $g=(g_n)_{n=0}^{\infty}$, with respect to the same filtration $\calF$. Their \emph{paraproduct} is a stochastic process $\Pi(f,g)=(\Pi_n(f,g))_{n=0}^{\infty}$ defined as
\[ \Pi_0(f,g) := 0, \quad \Pi_n(f,g) := \sum_{j=1}^{n} f_{j-1} (g_j - g_{j-1}) \quad \text{for}\ n\geq 1. \]
\emph{Truncated paraproducts} are random variables $\Pi_{n,n'}(f,g)$ defined for $0\leq n< n'$ as
\begin{align*}
\Pi_{n,n'}(f,g) & := \sum_{j=n+1}^{n'} (f_{j-1}-f_{n}) (g_j - g_{j-1}) \\
& = \Pi_{n'}(f,g) - \Pi_n(f,g) - f_n(g_{n'}-g_n).
\end{align*}
If
\[ \dif f_n := f_n - f_{n-1} \quad \text{for}\ n\geq 1 \]
denotes martingale differences, then the truncated paraproducts can be written, quite elegantly, as
\begin{equation}\label{eq:pprodaltexp}
\Pi_{n,n'}(f,g) = \sum_{n<i<j\leq n'} \dif f_i \dif g_j.
\end{equation}

Note that $g\mapsto\Pi(f,g)$ can be seen as a particular case of Burkholder's martingale transform \cite{MR0208647}.
He took $f$ to be an arbitrary process adapted to the filtration $\calF$ and bounded in the $\LL^\infty$-norm, $\norm{h}_{\LL^\infty} := \operatorname{ess\,sup}\abs{h}$, and showed
\[ \norm{\Pi(f,g)}_{\LL^q} \leq C_{q} \norm{f}_{\LL^\infty} \norm{g}_{\LL^q} \]
for any $q\in(1,\infty)$. On the other hand, we additionally assume that $f$ is a martingale, possibly unbounded.
Indeed, the word ``paraproduct'' will be preferred because martingales $f$ and $g$ can be treated symmetrically thanks to a simple summation by parts identity.
Estimates for martingale paraproducts outside Burkholder's range were first studied by Ba\~{n}uelos and Bennett \cite{MR955015} (even though in the continuous time and with respect to the Brownian filtration only) and by Chao and Long \cite{MR1079887}.
Inequalities on the $\LL^p$-spaces in the largest possible open range of exponents follow from \cite[Theorem~7]{MR1079887}:
\begin{equation}\label{eq:estChaoLong}
\norm{\Pi(f,g)}_{\LL^r} \leq C_{p,q} \norm{f}_{\LL^p} \norm{g}_{\LL^q},
\end{equation}
whenever
\begin{equation}\label{eq:exponents}
p,q\in(1,\infty),\ r\in\Big(\frac{1}{2},\infty\Big),\ \frac{1}{p}+\frac{1}{q}=\frac{1}{r}.
\end{equation}
Indeed, \cite[Theorem~7]{MR1079887} deals with a maximal estimate, namely
\begin{equation}\label{eq:estChaoLongmax}
\norm[\Big]{\sup_{n\geq0}|\Pi_n(f,g)|}_{\LL^r} \leq C_{p,q} \norm{f}_{\LL^p} \norm{g}_{\LL^q},
\end{equation}
which is stronger than \eqref{eq:estChaoLong} when $r\leq1$.
If $r>1$, then $\Pi(f,g)$ is again a martingale adapted to $\calF$, so in particular it also satisfies \eqref{eq:1lepingle}.
However, one still cannot relax the condition $\varrho>2$ for general $f$ and $g$.

It is a bit surprising that there exists a variant of L\'{e}pingle's inequality for truncated martingale paraproducts that allows $\varrho$ to go below $2$. This is the main result of our paper and it is a generalization of Theorem 1.2 from the paper \cite{MR2949622} by Do, Muscalu, and Thiele.

\begin{theorem}\label{thm:main}
Take exponents $p,q,r$ satisfying \eqref{eq:exponents}. If $f=(f_n)_{n=0}^{\infty}$ and $g=(g_n)_{n=0}^{\infty}$ are martingales with respect to the same filtration, then
\begin{equation}\label{eq:2variation}
\norm[\Bigg]{ \sup_{\substack{m\in\mathbb{N}\\ n_0<n_1<\cdots<n_m}} \bigg( \sum_{k=1}^{m} \abs[\Big]{ \Pi_{n_{k-1}, n_{k}}(f,g) }^\varrho \bigg)^{1/\varrho} }_{\LL^r}
\leq C_{p,q,\varrho} \norm{f}_{\LL^p} \norm{g}_{\LL^q}
\end{equation}
for any $\varrho\in(1,\infty)$ and
\begin{multline}\label{eq:2jump}
\norm[\Bigg]{ \sup_{\substack{m\in\mathbb{N}\\ n_0<n_1<\cdots<n_m}} \operatorname{card} \Set[\bigg]{ k\in\Set{1,\ldots,m} \given \abs[\Big]{ \Pi_{n_{k-1}, n_{k}}(f, g) } \geq \lambda } }_{\LL^r} \\
\leq C_{p,q} \lambda^{-1} \norm{f}_{\LL^p} \norm{g}_{\LL^q}
\end{multline}
for any $\lambda\in(0,\infty)$.
\end{theorem}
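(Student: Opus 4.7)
The plan is to prove the jump inequality \eqref{eq:2jump} and deduce the variational bound \eqref{eq:2variation} from it by extrapolation. Note that \eqref{eq:2jump} controls the jump count itself, not its square root as in Bourgain's inequality \eqref{eq:1jump}; this stronger form reflects the extra cancellation produced by the off-diagonal condition $i < j$ in \eqref{eq:pprodaltexp}. Correspondingly, the layer-cake identity
\[
\sum_k \abs[\big]{\Pi_{n_{k-1}, n_k}(f,g)}^\varrho
= \varrho \int_0^\infty \lambda^{\varrho-1}\, \operatorname{card}\Set[\big]{k \given \abs[\big]{\Pi_{n_{k-1}, n_k}(f,g)} \geq \lambda} \dif \lambda,
\]
combined with real interpolation, should allow one to pass from \eqref{eq:2jump} to \eqref{eq:2variation} for every $\varrho > 1$, so it suffices to establish \eqref{eq:2jump}.

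\textbf{The jump inequality.} To prove \eqref{eq:2jump} I would introduce stopping times $\tau_0 := 0$ and
\[
\tau_k := \inf\Set[\big]{n > \tau_{k-1} \given \abs{\Pi_{\tau_{k-1}, n}(f,g)} \geq \lambda},
\]
so that (up to a factor of $2$) the supremum on the left-hand side of \eqref{eq:2jump} is comparable to $N := \sum_{k \geq 1} \mathbf{1}_{\tau_k < \infty}$. On $\Set{\tau_k < \infty}$ one has $\abs{\Pi_{\tau_{k-1}, \tau_k}(f,g)} \geq \lambda$, and thus $\lambda N \leq \sum_k \abs[\big]{\Pi_{\tau_{k-1}, \tau_k}(f,g)}$. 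Writing
\[
\Pi_{\tau_{k-1}, \tau_k}(f,g) = \sum_{j = \tau_{k-1}+1}^{\tau_k} (f_{j-1} - f_{\tau_{k-1}})\, \dif g_j,
\]
and pairing with a sign sequence (after a Rademacher randomization to cope with the non-predictability of $\operatorname{sgn} \Pi_{\tau_{k-1}, \tau_k}$), one recognizes a martingale transform of $g$ whose predictable integrand is pointwise bounded by $2 f^*$. Burkholder's square-function inequality and H\"older's inequality then yield
\[
\norm[\bigg]{ \sum_k \abs[\big]{\Pi_{\tau_{k-1}, \tau_k}(f,g)} }_{\LL^r}
\lesssim \norm{f^*}_{\LL^p}\, \norm{S(g)}_{\LL^q}
\lesssim \norm{f}_{\LL^p}\, \norm{g}_{\LL^q},
\]
where $S(g)$ denotes Burkholder's square function, which would give \eqref{eq:2jump}.

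\textbf{Main obstacle.} The principal difficulty is the dualization step, since the natural sign sequence $\operatorname{sgn} \Pi_{\tau_{k-1}, \tau_k}(f,g)$ is $\calF_{\tau_k}$-measurable but not $\calF_{\tau_{k-1}}$-measurable, so the linear combination $\sum_k \operatorname{sgn}(\Pi_{\tau_{k-1}, \tau_k}(f,g)) \cdot \Pi_{\tau_{k-1}, \tau_k}(f,g)$ is not literally a martingale transform of $g$. Handling it requires either a Khintchine--Rademacher randomization combined with passage through an $\ell^2$-square function, or an iterated greedy-stopping scheme that peels off one layer of signs at a time. In order to reach the full range \eqref{eq:exponents} of exponents with $r$ possibly below $1$, the argument must respect the summation-by-parts symmetry between $f$ and $g$, which forces a symmetric Calder\'on--Zygmund-type decomposition controlling the excursions of both martingales simultaneously.
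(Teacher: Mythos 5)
Your overall architecture --- prove the jump inequality \eqref{eq:2jump} by a greedy stopping-time construction and then deduce \eqref{eq:2variation} by real interpolation --- matches the paper's. However, there are two genuine gaps in the execution.

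First, your stopping rule is incomplete. Truncated paraproducts are not additive: by Chen's relation,
\[
\Pi_{n',n''}(f,g) = \Pi_{\tau_{k-1},n''}(f,g) - \Pi_{\tau_{k-1},n'}(f,g) - \big(f_{n'}-f_{\tau_{k-1}}\big)\big(g_{n''}-g_{n'}\big)
\]
for $\tau_{k-1}\leq n'<n''<\tau_k$. With your rule the first two terms on the right are each $<\lambda$ in absolute value, but the cross term $(f_{n'}-f_{\tau_{k-1}})(g_{n''}-g_{n'})$ is completely uncontrolled, so an interval containing no stopping time can still carry a $\lambda$-jump of the truncated paraproduct; your claimed comparability of the jump count with $\sum_k \mathbf{1}_{\tau_k<\infty}$ therefore fails. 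The paper's stopping times additionally trigger when $\max_{n'}\abs{f_{n'}-f_{\tau_{k-1}}}\abs{g_n-g_{n'}}\geq\lambda/3$, and the resulting extra contribution to $\lambda\widetilde{N}_\lambda$ is handled by Cauchy--Schwarz together with the $\ell^2$-valued Doob inequality of Lemma~\ref{lm:auxineq}.

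Second, the estimate you actually need is the $\ell^1$-valued bound $\norm[\big]{\sum_k\abs{\Pi_{\tau_{k-1},\tau_k}(f,g)}}_{\LL^r}\lesssim\norm{f}_{\LL^p}\norm{g}_{\LL^q}$, and neither of your proposed devices produces it. Rademacher randomization plus Khintchine controls only $\big(\sum_k\abs{\Pi_{\tau_{k-1},\tau_k}(f,g)}^2\big)^{1/2}$, i.e.\ the $\ell^2_k$ sum, which is strictly weaker than the $\ell^1_k$ sum required to bound $\lambda N$. Moreover, Burkholder's martingale-transform and square-function inequalities live in $\LL^s$ for $1<s<\infty$, whereas here $r$ ranges down to $1/2$, so the step ``Burkholder plus H\"older'' is not available in the stated generality. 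The paper proves the $\ell^1$-valued estimate (Proposition~\ref{prop:l1pprod}) for $r\geq1$ by dualizing against a weight and invoking Os\k{e}kowski's weighted Burkholder--Davis--Gundy inequality \eqref{eq:BDGweighted} (a Fefferman--Stein type endpoint result), after which the square function of each block paraproduct is dominated pointwise by $\textup{M}f^{(k)}\cdot\textup{S}g^{(k)}$ and the vector-valued Doob and square-function estimates of Lemma~\ref{lm:auxineq} finish the argument; for $r<1$ it uses Gundy's decomposition of $g$ at a suitable height followed by multilinear real interpolation. Your closing remark about a symmetric Calder\'on--Zygmund-type decomposition points in the right direction for the quasi-Banach range, but as written the proof does not close.
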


Indeed, Do, Muscalu, and Thiele \cite{MR2949622} considered variants of Theorem~\ref{thm:main} for either dyadic martingales or Littlewood--Paley-type convolutions. They motivate their result by an application to the bilinear iterated Fourier integral in the sequel paper \cite{MR3596720}.
The main purpose of this note is to generalize their result to arbitrary martingales, since \cite{MR2949622} repeatedly relies on doubling conditions to both raise and lower the exponents $p$ and $q$.
In our approach we adapt many ideas from \cite{MR2949622}, but we also use some fundamental martingale inequalities that only recently became available in the literature.
Consequently, we are even able to give a somewhat shorter proof.

\subsection{Continuous-time martingales}
One benefit of having Theorem~\ref{thm:main} formulated for general discrete-time martingales is that estimates \eqref{eq:2variation} and \eqref{eq:2jump} immediately transfer to continuous-time martingales $X=(X_t)_{t\geq 0}$ and $Y=(Y_t)_{t\geq 0}$. It is standard in stochastic calculus to assume that $X$ and $Y$ almost surely have \cadlag{} paths and that their filtration $\calF=(\calF_t)_{t\geq 0}$ satisfies ``the usual hypotheses'' from Protter's book \cite{MR2273672}, i.e.\ that $\calF_0$ is complete and that $\calF$ is right-continuous.
We fix the exponents $p,q,r$ satisfying \eqref{eq:exponents} and additionally assume that
\begin{equation}\label{eq:contLpassume}
\norm{X}_{\LL^p}:=\sup_{t\geq0}\norm{X_t}_{\LL^p}<\infty \quad\text{and}\quad \norm{Y}_{\LL^q}:=\sup_{t\geq0}\norm{Y_t}_{\LL^q}<\infty.
\end{equation}
Under more restrictive conditions on $X$ and $Y$, such as $\norm{X}_{\LL^\infty}<\infty$ and $\norm{Y}_{\LL^2}<\infty$, the papers \cite{MR955015} and \cite{MR3896720} proceed by defining the paraproduct as the process $\Pi(X,Y)=(\Pi_t(X,Y))_{t\geq 0}$ given by the stochastic integral
\begin{equation}\label{eq:integraltoapprox}
\Pi_t(X,Y) := \int_{(0,t]} X_{s-} \dif Y_s.
\end{equation}
Here $X_{s-}$ stands for the left limit $\lim_{u\to s-}X_u$. The above integral is understood as the It\^{o} integral and it yields another process with almost surely \cadlag{} paths.
In order to extend the definition of $\Pi(X,Y)$ to the martingales satisfying \eqref{eq:contLpassume} only, and to enable the application of Theorem~\ref{thm:main}, we prefer to construct the martingale paraproduct as a limit of certain discrete-time paraproducts, namely the Riemann sums of \eqref{eq:integraltoapprox}.

A \emph{random partition} of $[0,\infty)$ will be any tuple $\Sigma=(\tau_0,\tau_1,\ldots,\tau_l)$ of finite stopping times with respect to $\mathcal{F}$ such that $0=\tau_0\leq\tau_1\leq\cdots\leq\tau_l$.
We define the corresponding \emph{Riemann sum} of the integral \eqref{eq:integraltoapprox} as the stochastic process $S(X,Y;\Sigma)=(S_{t}(X,Y;\Sigma))_{t\geq 0}$ given by
\[ S_t(X,Y;\Sigma) := \sum_{j=1}^{l} X_{\min\{t,\tau_{j-1}\}} \big(Y_{\min\{t,\tau_j\}}-Y_{\min\{t,\tau_{j-1}\}}\big). \]
Following the language of \cite{MR2273672}, let us say that a sequence of random partitions $(\Sigma_n)_{n=1}^{\infty}$, $\Sigma_n=(\tau^{(n)}_0,\tau^{(n)}_1,\ldots,\tau^{(n)}_{l_n})$, \emph{tends to the identity} if $\lim_{n\to\infty}\tau^{(n)}_{l_n}=\infty$ a.s.\@ and $\lim_{n\to\infty}\max_{1\leq j\leq l_n}|\tau^{(n)}_j - \tau^{(n)}_{j-1}|=0$ a.s.

\begin{corollary}\label{cor:main}
\begin{enumerate}[(a)]
\item\label{it:cor:a} There exists a unique (up to indistinguishability) stochastic proces $\Pi(X,Y)=(\Pi_t(X,Y))_{t\geq 0}$ with a.s.\@ \cadlag{} paths such that for any sequence of random partitions $(\Sigma_n)_{n=1}^{\infty}$ tending to the identity the Riemann sums $S(X,Y;\Sigma_n)$ converge uniformly on compacts in probability (u.c.p.) towards $\Pi(X,Y)$, i.e.
\[ \lim_{n\to\infty} \mathbb{P}\Big(\sup_{s\in[0,t]}|S_s(X,Y;\Sigma_n)-\Pi_s(X,Y)|>\varepsilon\Big) = 0 \]
for each $\varepsilon>0$ and each $t>0$.
We say that $\Pi(X,Y)$ is the \emph{paraproduct} of martingales $X$ and $Y$.
\item\label{it:cor:b} \emph{Truncated paraproducts} are now defined as random variables
\[ \Pi_{t,t'}(X,Y) := \Pi_{t'}(X,Y) - \Pi_{t}(X,Y) - X_t (Y_{t'}-Y_t) \]
for any $0\leq t< t'<\infty$. We have
\begin{equation}\label{eq:2variationcont}
\norm[\bigg]{ \sup_{\substack{m\in\mathbb{N}\\ t_0<t_1<\cdots<t_m}} \Big( \sum_{k=1}^{m} \abs[\Big]{ \Pi_{t_{k-1},t_k} (X,Y) }^\varrho \Big)^{1/\varrho} }_{\LL^r}
\leq C_{p,q,\varrho} \norm{X}_{\LL^p} \norm{Y}_{\LL^q}
\end{equation}
for any $\varrho\in(1,\infty)$ and
\begin{multline}\label{eq:2jumpcont}
\norm[\Bigg]{ \sup_{\substack{m\in\mathbb{N}\\ t_0<t_1<\cdots<t_m}} \operatorname{card} \Set[\bigg]{ k\in\Set{1,\ldots,m} \given \abs[\Big]{ \Pi_{t_{k-1},t_k}(X,Y) } \geq \lambda } }_{\LL^r} \\
\leq C_{p,q} \lambda^{-1} \norm{X}_{\LL^p} \norm{Y}_{\LL^q}
\end{multline}
for any $\lambda\in(0,\infty)$.
\end{enumerate}
\end{corollary}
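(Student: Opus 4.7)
The plan is to first establish part (a), the existence of a \cadlag{} process $\Pi(X,Y)$ that is the u.c.p.\ limit of Riemann sums, and then to deduce part (b) by applying Theorem \ref{thm:main} to sampled discrete-time martingales. The central observation is that for any random partition $\Sigma=(\tau_0,\ldots,\tau_l)$ of $[0,\infty)$, the optional sampling theorem yields discrete-time martingales $X^\Sigma_j:=X_{\tau_j}$ and $Y^\Sigma_j:=Y_{\tau_j}$ with respect to $(\calF_{\tau_j})_{j=0}^l$ satisfying $\norm{X^\Sigma}_{\LL^p}\leq\norm{X}_{\LL^p}$ and $\norm{Y^\Sigma}_{\LL^q}\leq\norm{Y}_{\LL^q}$, while at each partition time $S_{\tau_k}(X,Y;\Sigma)$ coincides with the discrete paraproduct $\Pi_k(X^\Sigma,Y^\Sigma)$. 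Thus Theorem \ref{thm:main}, together with the Chao--Long maximal estimate \eqref{eq:estChaoLongmax} transported to truncated paraproducts, furnishes bounds on $S(X,Y;\Sigma)$ that are uniform in $\Sigma$ and bilinear in $X,Y$.

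For existence, I would first treat the bounded case $X,Y\in\LL^\infty$, where $\Pi_t(X,Y)=\int_{(0,t]}X_{s-}\dif Y_s$ is the classical It\^o integral and u.c.p.\ convergence of Riemann sums along every sequence tending to the identity is standard (see \cite[Theorem~II.21]{MR2273672}). For $X,Y$ satisfying only \eqref{eq:contLpassume}, introduce bounded approximants $X^{(N)}_t:=\mathbb{E}[X_\infty\mathbf{1}_{\abs{X_\infty}\leq N}\mid\calF_t]$ and the analogous $Y^{(N)}$. Applying the discrete maximal bound to $S(X^{(N)}-X^{(M)},Y^{(N)};\Sigma)$ and $S(X^{(M)},Y^{(N)}-Y^{(M)};\Sigma)$, uniformly in $\Sigma$, and then letting $\Sigma$ tend to the identity via the bounded-case convergence and Fatou's lemma, shows that $\Pi(X^{(N)},Y^{(N)})$ is Cauchy in the $\LL^r$-maximal norm; the limit $\Pi(X,Y)$ admits a \cadlag{} modification. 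A three-$\varepsilon$ argument then combines these same uniform bounds with the bounded-case u.c.p.\ convergence to yield $S(X,Y;\Sigma_n)\to\Pi(X,Y)$ u.c.p.\ along every partition sequence tending to the identity, from which uniqueness follows.

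For part (b), fix $0\leq t_0<\cdots<t_m$ and let $(\Sigma_n)$ be any sequence of random partitions refining $\{t_0,\ldots,t_m\}$ and tending to the identity. If $a_k$ denotes the position of $t_k$ within $\Sigma_n$, a short computation gives
\[
\Pi_{a_{k-1},a_k}(X^{\Sigma_n},Y^{\Sigma_n})=S_{t_k}(X,Y;\Sigma_n)-S_{t_{k-1}}(X,Y;\Sigma_n)-X_{t_{k-1}}(Y_{t_k}-Y_{t_{k-1}}),
\]
which by part (a) converges in probability to $\Pi_{t_{k-1},t_k}(X,Y)$. Theorem \ref{thm:main} applied to the discrete martingales $X^{\Sigma_n},Y^{\Sigma_n}$ controls the $\ell^\varrho$-variation and the $\lambda$-jump counting function of the truncated discrete paraproducts uniformly in $n$; Fatou's lemma transfers \eqref{eq:2variation} and \eqref{eq:2jump} to $\{\Pi_{t_{k-1},t_k}(X,Y)\}_{k=1}^m$ for this fixed subdivision, and monotone convergence over finite subdivisions concludes.

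The main obstacle is the bootstrap in part (a): promoting u.c.p.\ convergence of Riemann sums from bounded to $\LL^p$-bounded martingales along an arbitrary sequence of random partitions. Theorem \ref{thm:main} and its maximal counterpart are exactly what make this bootstrap clean, but some care is required to interchange $\sup_t$ with the various limits and to extract a version of the limit process with \cadlag{} paths.
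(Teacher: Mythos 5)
Your proposal is correct and follows essentially the same route as the paper: the bounded case via Protter's Theorem II.21, $\LL^\infty$-approximation of $X$ and $Y$ combined with the partition-uniform maximal bound \eqref{eq:estChaoLongmax} for sampled discrete martingales to get u.c.p.\ convergence in general, and then Theorem~\ref{thm:main} applied to sampled martingales plus Fatou and monotone convergence for part (b). The only points the paper spells out that you gloss over are passing to a.s.\ convergent subsequences before invoking Fatou, and reducing the uncountable supremum over real times to rational times via the \cadlag{} property; both are routine.
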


\subsection{Connection with rough paths}
One can view the triple
\[
H_{n} := (f_{n},g_{n},\Pi_{n}(f,g))
\]
as a process with values in a $3$-dimensional Heisenberg group $\mathbb{H} \cong \mathbb{R}^{3}$ with the group operation
\[ (x,y,z) \cdot (x',y',z') = (x+x',y+y',z+z'+xy'). \]
Then the truncated martingale paraproducts $\Pi_{n,n'}$ are precisely the $z$-coordinates of the differences of this process.
More precisely, for any times $n \leq n'$ we have
\[
H_{n} \cdot (f_{n'}-f_{n}, g_{n'}-g_{n}, \Pi_{n,n'}(f,g)) = H_{n'}.
\]
This corresponds to Chen's relation \cite[(2.1)]{MR3289027} in the theory of rough paths.

On the Heisenberg group we consider the homogeneous \emph{box norm} $\norm{(x,y,z)} := \max (\abs{x}, \abs{y}, \abs{z}^{1/2})$ and the corresponding distance function $d(H,H') := \norm{H^{-1} H'}$.
One can verify that $\norm{H \cdot H'} \leq \norm{H} + \norm{H'}$, and this implies the triangle inequality $d(H,H'') \leq d(H,H') + d(H',H'')$.
Any other left-invariant homogeneous distance, e.g.\ the Carnot--Carath\'eodory distance, would work equally well.
Combining \eqref{eq:2jump} and \eqref{eq:1jump} one can obtain the jump estimate
\begin{multline*}
\norm[\Bigg]{ \bigg( \sup_{\substack{m\in\mathbb{N}\\ n_0<n_1<\cdots<n_m}} \operatorname{card} \Set[\Big]{ k\in\Set{1,\ldots,m} \given d(H_{n_{k}},H_{n_{k-1}}) \geq \lambda } \bigg)^{1/2} }_{\LL^p}\\
\leq C_p \lambda^{-1} (\norm{f}_{\LL^p} + \norm{g}_{\LL^p}).
\end{multline*}
Either using this estimate and \cite[Lemma~2.17]{arxiv:1808.04592} or combining \eqref{eq:2variation} and \eqref{eq:1lepingle} one can also obtain the variational estimate
\begin{equation}\label{eq:rough-lepingle}
\norm[\Bigg]{ \sup_{\substack{m\in\mathbb{N}\\ n_0<n_1<\cdots<n_m}} \bigg( \sum_{k=1}^{m} d(H_{n_{k}},H_{n_{k-1}})^\varrho \bigg)^{1/\varrho} }_{\LL^p}
\leq C_{p,\varrho} (\norm{f}_{\LL^p} + \norm{g}_{\LL^p})
\end{equation}
for any $\varrho>2$.
The estimate \eqref{eq:rough-lepingle} for continuous martingales is a special case of a result of Friz and Victoir \cite[Theorem 14]{MR2483743}, and for general \cadlag{} martingales it is a special case of a result of Chevyrev and Friz \cite[Theorem 4.7]{MR3909973} with $F(x)=x^{r}$.
Indeed, in our notation these results can be stated as \eqref{eq:rough-lepingle} with $f,g$ replaced by $\textup{S}f,\textup{S}g$ on the right-hand side, where $\textup{S}$ denotes the martingale square function as in \eqref{eq:sq-fct}.
Hence the estimate \eqref{eq:2jumpcont} can be seen as an off-diagonal and endpoint version of the cited results.

\section{Some known martingale inequalities}
We begin this section with a few words on the notation. Then we review several known martingale inequalities that will be needed in subsequent sections. Some of them we could not find formulated elsewhere with exactly the same assumptions. However, the proofs of those inequalities are still quite straightforward using the results available in the existing literature and we include them for completeness.

For any two quantities $A,B\in[0,\infty]$ we will write $A\lesssim B$ when there exists an unimportant constant $C\in[0,\infty)$ such that $A\leq CB$. Furthermore, we will write $A\sim B$ if both $A\lesssim B$ and $B\lesssim A$ hold. Dependencies of the implicit constants on some parameters will be denoted in the subscripts of $\lesssim$ and $\sim$.
For real numbers $a$ and $b$ we will write
\[ a\wedge b:=\min\Set{a,b},\quad a\vee b:=\max\Set{a,b}. \]
We have already encountered the \emph{$\LL^p$-quasinorms} $h\mapsto\norm{h}_{\LL^p}$ in the introductory section, both for finite $p$ and for $p=\infty$. Recall that for a martingale $f=(f_n)_{n=0}^{\infty}$ the quantity $\norm{f}_{\LL^p}$ is defined by \eqref{eq:martLpnorm}.
Any nonnegative random variable $w$ gives rise to the \emph{weighted $\LL^p$-quasinorms}, given for $p\in(0,\infty)$ as
\[ \norm{h}_{\LL^p(w)} := \big( \EE{\abs{h}^p w} \big)^{1/p}. \]
On the other hand, the \emph{weak $\LL^p$-quasinorm} is defined as
\[ \norm{h}_{\LL^{p,\infty}} := \Big( \sup_{\alpha\in(0,\infty)} \alpha^p \,\mathbb{P}(\abs{h}>\alpha) \Big)^{1/p} \]
for any $p\in(0,\infty)$.
Any sequence of random variables $h=(h^{(k)})_{k=1}^{\infty}$ can be regarded as a vector-valued random element and for $p\in(0,\infty]$ and $q\in(0,\infty)$ we define the \emph{mixed $\LL^p(\ell^q)$-quasinorm}
\[ \norm{h}_{\LL^p(\ell^q)} = \norm[\big]{h^{(k)}}_{\LL^p(\ell_k^q)} := \norm[\bigg]{ \Big( \sum_{k=1}^{\infty} \abs[\big]{h^{(k)}}^q \Big)^{1/q} }_{\LL^p}. \]
Finally, $p'$ will always denote the conjugated exponent of $p\in[1,\infty]$, i.e.\@ the unique number $p'\in[1,\infty]$ such that $1/p+1/p'=1$.

For any martingale $f=(f_n)_{n=0}^{\infty}$ with respect to a fixed filtration $\calF=(\calF_n)_{n=0}^{\infty}$ one defines the \emph{maximal function}
\[ \textup{M}f := \sup_{n\geq 0} \abs{f_n} \]
and the \emph{square function}
\begin{equation}\label{eq:sq-fct}
\textup{S}f := \Big( \sum_{n=1}^{\infty} \abs{\dif f_n}^2 \Big)^{1/2}.
\end{equation}
Note that $\textup{M}f$ and $\textup{S}f$ are two random variables taking values in $[0,\infty]$. In different terminology these are the limits of the maximum process of $f$ and the quadratic variation of $f$, respectively. If we start merely from a random variable $h$, then we automatically assign to it the martingale $(h_n)_{n=0}^{\infty}$ defined by $h_n:=\EE{ h \given \calF_n }$, so $\textup{M}h$ and $\textup{S}h$ still make sense.

The well known \emph{Burkholder--Davis--Gundy inequality} claims that
\begin{equation}\label{eq:BDG}
\norm{\textup{M}f}_{\LL^p} \sim_p \norm{\textup{S}f}_{\LL^p}
\end{equation}
for every $p\in[1,\infty)$. Indeed, the case $p>1$ is due to Burkholder \cite{MR0208647}, while the case $p=1$ was shown by Davis \cite{MR0268966}.
A weighted version of the latter case was established by Os\k{e}kowski \cite{MR3688518}:
\begin{equation}\label{eq:BDGweighted}
\norm{\textup{M}f}_{\LL^1(w)} \lesssim \norm{\textup{S}f}_{\LL^1(\textup{M}w)},
\end{equation}
where $w$ is a nonnegative integrable random variable, interpreted as a weight. The implicit constant in \eqref{eq:BDGweighted} is an absolute one and Os\k{e}kowski could choose \mbox{$16(\sqrt{2}+1)$}. Inequality \eqref{eq:BDGweighted} can also be viewed as a probabilistic analogue of a weighted estimate by Fefferman and Stein \cite{MR0284802}.

Moreover, \emph{Doob's maximal inequality} reads
\begin{equation}\label{eq:Doob}
\norm{\textup{M}f}_{\LL^p} \leq p' \norm{f}_{\LL^p}
\end{equation}
for every $p\in(1,\infty]$. It also has a weighted version, formulated for instance as a part of Theorem~3.2.3 in the book \cite{MR3617205}:
\begin{equation}\label{eq:Doobweighted}
\norm{\textup{M}f}_{\LL^p(w)} \leq p' \norm{f_{\infty}}_{\LL^p(\textup{M}w)}
\end{equation}
for $p\in(1,\infty]$. In \eqref{eq:Doobweighted} we assume, for convenience, that $(f_n)_{n=0}^{\infty}$ eventually becomes a constant sequence, so that $f_{\infty} := \lim_{n\to\infty} f_n$ trivially makes sense with respect to every possible mode of convergence.

Suppose that $T_0\leq T_1\leq T_2\leq\cdots$ is a sequence of stopping times taking values in $\mathbb{N}_0$ with respect to the filtration $\calF$ and assume that each $T_k$ is bounded. These stopping times will be used for the purpose of certain ``localization.'' Boundedness of each individual $T_k$ is a convenient assumption for the application of the optional sampling theorem; see e.g.~\cite{MR2059709}.
For every $k\in\mathbb{N}$ and every $n\in\mathbb{N}_0$ we note that $(n\vee T_{k-1})\wedge T_k$ is also a stopping time with respect to $\calF$ and define
\begin{equation}\label{eq:newfiltrations}
\calF^{(k)}_n := \calF_{(n\vee T_{k-1})\wedge T_k}.
\end{equation}
That way, each $\calF^{(k)}:=(\calF^{(k)}_n)_{n=0}^{\infty}$ becomes a filtration of the original probability space and each of these sequences of $\sigma$-algebras becomes constant for sufficiently large indices $n$.

\begin{lemma}\label{lm:lqdoobaux}
Let $(T_k)_{k=0}^{\infty}$ be an increasing sequence of bounded stopping times, let $(\calF^{(k)})_{k=1}^{\infty}$ be a sequence of filtrations defined by \eqref{eq:newfiltrations}, and for each $k\in\mathbb{N}$ let $f^{(k)}=(f^{(k)}_n)_{n=0}^{\infty}$ be a martingale with respect to $\calF^{(k)}$ that eventually becomes a constant sequence. For any $p,q\in(1,\infty)$ we have
\begin{equation}\label{eq:lqdoobaux}
\norm[\big]{ \textup{M}f^{(k)} }_{\LL^p(\ell^q_k)} \lesssim_{p,q} \norm[\big]{ f^{(k)}_{\infty} }_{\LL^p(\ell^q_k)}.
\end{equation}
\end{lemma}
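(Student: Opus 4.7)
The plan is to use optional sampling to reduce \eqref{eq:lqdoobaux} to the vector-valued Doob inequality for a single filtration, and then to handle the latter via the weighted Doob inequality \eqref{eq:Doobweighted} combined with a duality argument.

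First, for each $k$ define the $\calF$-martingale $\tilde{f}^{(k)}_n := \EE{ f^{(k)}_\infty \given \calF_n }$. The random time $\tau^{(k)}_n := (n\vee T_{k-1})\wedge T_k$ is a bounded $\calF$-stopping time, and by construction $\calF^{(k)}_n = \calF_{\tau^{(k)}_n}$, so optional sampling applied to the closed martingale $\tilde f^{(k)}$ gives $f^{(k)}_n = \EE{ f^{(k)}_\infty \given \calF_{\tau^{(k)}_n} } = \tilde{f}^{(k)}_{\tau^{(k)}_n}$. Taking suprema in $n$ yields the pointwise bound $\textup{M}f^{(k)} \leq \textup{M}\tilde{f}^{(k)}$, and since $\tilde f^{(k)}_\infty = f^{(k)}_\infty$, inequality \eqref{eq:lqdoobaux} reduces to the Fefferman--Stein-type bound $\norm{\textup{M}\tilde{f}^{(k)}}_{\LL^p(\ell^q_k)} \lesssim_{p,q} \norm{\tilde{f}^{(k)}_\infty}_{\LL^p(\ell^q_k)}$ for martingales with respect to the common filtration $\calF$.

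In the range $p > q$ I would obtain this by duality. Using $p/q > 1$, write the left-hand side as $\bigl\|\sum_k (\textup{M}\tilde{f}^{(k)})^q\bigr\|_{\LL^{p/q}}^{1/q}$ and test against a nonnegative $w$ with $\norm{w}_{\LL^{(p/q)'}} \leq 1$. Applying \eqref{eq:Doobweighted} with exponent $q$ to each summand gives $\EE{(\textup{M}\tilde{f}^{(k)})^q w} \leq (q')^q \EE{\abs{\tilde{f}^{(k)}_\infty}^q \textup{M}w}$; summing over $k$, Hölder's inequality, and the scalar Doob inequality \eqref{eq:Doob} on $\LL^{(p/q)'}$ then close the estimate, where the monotonicity $\textup{M}^{(k)}w \leq \textup{M}w$ is used implicitly because each $\calF^{(k)}$-conditional expectation is taken along a stopping time of the original filtration $\calF$.

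The complementary range $p \leq q$ is the main obstacle, since the duality trick is no longer available in a single step. I would handle it by Marcinkiewicz interpolation between the trivial diagonal case $p = q$ (scalar Doob combined with Fubini) and a weak-type $(1,1)$ endpoint for the vector-valued maximal operator, the latter obtainable from Gundy's martingale Calder\'on--Zygmund decomposition; alternatively, one may cite the classical vector-valued martingale maximal inequality, which holds in the full range $1 < p, q < \infty$ on the mixed-norm space $\LL^p(\ell^q)$.
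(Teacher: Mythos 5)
Your argument is correct, but it differs from the paper's proof in two substantive ways. First, your opening reduction via optional sampling --- writing $f^{(k)}_n=\tilde f^{(k)}_{\tau^{(k)}_n}$ for the closed $\calF$-martingale $\tilde f^{(k)}_n=\EE{f^{(k)}_\infty\given\calF_n}$, so that $\textup{M}f^{(k)}\le\textup{M}\tilde f^{(k)}$ and $\tilde f^{(k)}_\infty=f^{(k)}_\infty$ --- is not in the paper; it removes exactly the obstruction the authors point out (that \cite[Theorem~3.2.7]{MR3617205} assumes a common filtration), after which citing that theorem, or reproving it, is legitimate. The paper instead keeps the filtrations $\calF^{(k)}$ throughout and absorbs the mismatch into the dual side via the pointwise bound $\textup{M}w^{(k)}\le\textup{M}w$; your parenthetical about $\textup{M}^{(k)}w\le\textup{M}w$ is thus redundant in your setup, since after your reduction all martingales live over $\calF$. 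Second, for $p\le q$ the paper does not interpolate: it writes $\norm{\textup{M}f^{(k)}}_{\LL^p(\ell^q_k)}^r=\norm{(\textup{M}f^{(k)})^r}_{\LL^a(\ell^b_k)}$ with $a=p/r$, $b=q/r$, dualizes the mixed norm against a sequence $(h^{(k)})$, applies the weighted Doob inequality \eqref{eq:Doobweighted} with exponent $r$ to each term, and then invokes the already-proved case with exponents $(a',b')$ (which satisfy $a'\ge b'$) to control $\norm{\textup{M}h^{(k)}}_{\LL^{a'}(\ell^{b'}_k)}$. This keeps the whole proof self-contained given \eqref{eq:Doob} and \eqref{eq:Doobweighted}. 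Your alternative --- the diagonal case $p=q$ by Fubini plus a weak $\LL^1(\ell^q)\to\LL^{1,\infty}$ endpoint from the ($\ell^q$-valued) Gundy decomposition, followed by Marcinkiewicz interpolation --- is the classical Fefferman--Stein route and does work (the same three-part Gundy argument the paper uses later for \eqref{eq:weakest} adapts to the maximal operator), and it yields a weak-type endpoint as a by-product; but as written it is only a sketch, and it imports heavier machinery (vector-valued Gundy decomposition, interpolation of $\ell^q$-valued sublinear operators) than the paper's second duality step. Either way the statement is established in the full range $p,q\in(1,\infty)$.
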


Lemma~\ref{lm:lqdoobaux} can be viewed as an $\ell^q$-valued extension of Doob's maximal inequality \eqref{eq:Doob}. The proof of \eqref{eq:lqdoobaux} is based on \eqref{eq:Doobweighted} and it already exists as the proof of \cite[Theorem~3.2.7]{MR3617205}.
However, the working assumption in \cite{MR3617205} is that $f^{(k)}$ are arbitrary martingales with respect to the same filtration, which is not the case here.
For this reason and for the sake of completeness we prefer to repeat the short argument, rather than just invoke the result from \cite{MR3617205}.

\begin{proof}[Proof of Lemma~\ref{lm:lqdoobaux}.]
The case $p\geq q$ is handled first. Let $r\in(1,\infty]$ denote the conjugated exponent of $p/q$.
To an arbitrary random variable $w\geq 0$ satisfying $\norm{w}_{\LL^{r}} = 1$ we associate the martingales $(w_n)_{n=0}^{\infty}$ and $w^{(k)}=(w^{(k)}_n)_{n=0}^{\infty}$, for each $k\in\mathbb{N}$, via
\begin{equation}\label{eq:maxfnsofw}
w_n := \EE{w \given \calF_n}, \quad w^{(k)}_n := \EE{w \given \calF^{(k)}_n}.
\end{equation}
Consider the expression
\[ \EE[\bigg]{ \Big( \sum_{k=1}^{\infty} \big(\textup{M}f^{(k)}\big)^q \Big) w }
= \sum_{k=1}^{\infty} \EE[\bigg]{ \big(\textup{M}f^{(k)}\big)^q w }
= \sum_{k=1}^{\infty} \norm[\big]{ \textup{M}f^{(k)} }_{\LL^{q}(w)}^q. \]
By \eqref{eq:Doobweighted} this is at most a constant depending on $q$ times
\begin{align*}
& \sum_{k=1}^{\infty} \norm[\big]{ f^{(k)}_{\infty} }_{\LL^{q}(\textup{M}w^{(k)})}^q
= \sum_{k=1}^{\infty} \EE[\Big]{ \abs[\big]{f^{(k)}_{\infty}}^q \big(\textup{M}w^{(k)}\big) } \\
& \leq \EE[\Big]{ \Big( \sum_{k=1}^{\infty} \abs[\big]{f^{(k)}_{\infty}}^q \Big) \big(\textup{M} w\big) }
\leq \norm[\Big]{ \sum_{k=1}^{\infty} \abs[\big]{f^{(k)}_{\infty}}^q }_{\LL^{p/q}} \norm[\big]{\textup{M}w}_{\LL^{r}}.
\end{align*}
Applying \eqref{eq:Doob} to $\norm{\textup{M}w}_{\LL^{r}}$ and recalling the freedom that we had in choosing $w$, we obtain
\[ \norm[\Big]{ \sum_{k=1}^{\infty} \big(\textup{M}f^{(k)}\big)^q }_{\LL^{p/q}} \lesssim_{p,q} \norm[\Big]{ \sum_{k=1}^{\infty} \abs[\big]{f^{(k)}_{\infty}}^q }_{\LL^{p/q}}, \]
which transforms into \eqref{eq:lqdoobaux} after taking the $q$-th root of both sides.

Turning to the case $p\leq q$, we take some $r\in(1,p)$ and denote $a:=p/r\in(1,\infty)$, $b:=q/r\in(1,\infty)$. Write
\begin{equation}\label{eq:lqdoobcalc}
\norm[\big]{ \textup{M}f^{(k)} }_{\LL^p(\ell^q_k)}^r
= \bigg( \EE{ \sum_{k=1}^{\infty} \big(\textup{M}f^{(k)}\big)^q }^{p/q} \bigg)^{r/p}
= \norm[\big]{ \big(\textup{M}f^{(k)}\big)^{r} }_{\LL^a(\ell^b_k)}.
\end{equation}
We are going to dualize the mixed $\LL^a(\ell^b)$-norm above and for this we take a sequence $h=(h^{(k)})_{k=1}^{\infty}$ of nonnegative random variables such that $\norm{h}_{\LL^{a'}(\ell^{b'})} <\infty$. Each $h^{(k)}$ defines a martingale $(h^{(k)}_n)_{n=0}^{\infty}$ by $h^{(k)}_n:=\EE{h^{(k)} \given \calF^{(k)}_n}$.
Using \eqref{eq:Doobweighted} for each fixed $k$ followed by H\"{o}lder's inequality we obtain
\begin{align*}
\mathbb{E} \sum_{k=1}^{\infty} \big(\textup{M}f^{(k)}\big)^r h^{(k)}
& = \sum_{k=1}^{\infty} \norm[\big]{ \textup{M}f^{(k)} }_{\LL^{r}(h^{(k)})}^r
\lesssim_r \sum_{k=1}^{\infty} \norm[\big]{ f^{(k)}_{\infty} }_{\LL^{r}(\textup{M}h^{(k)})}^r \\
&= \mathbb{E} \sum_{k=1}^{\infty} \abs[\big]{f^{(k)}_{\infty}}^r \big(\textup{M}h^{(k)}\big)
\leq \norm[\big]{ \abs[\big]{f^{(k)}_{\infty}}^r }_{\LL^{a}(\ell^{b}_k)} \norm[\big]{\textup{M}h^{(k)}}_{\LL^{a'}(\ell^{b'}_k)}.
\end{align*}
Then applying the previous case of \eqref{eq:lqdoobaux} (with $p,q$ replaced by $a',b'$) to get
\[ \norm[\big]{\textup{M}h^{(k)}}_{\LL^{a'}(\ell^{b'}_k)} \lesssim_{a,b} \norm[\big]{ h^{(k)}_{\infty} }_{\LL^{a'}(\ell^{b'}_k)} = \norm{h}_{\LL^{a'}(\ell^{b'})} \]
and using duality we end up with
\[ \norm[\big]{ \big(\textup{M}f^{(k)}\big)^{r} }_{\LL^a(\ell^b_k)}
\lesssim_{a,b} \norm[\big]{ \abs[\big]{f^{(k)}_{\infty}}^r }_{\LL^{a}(\ell^{b}_k)}. \]
Recall the computation \eqref{eq:lqdoobcalc} and take the $r$-th root of both sides.
\end{proof}

Now, let $f=(f_n)_{n=0}^{\infty}$ be a single martingale with respect to $\calF$. For every $k\in\mathbb{N}$ and every $n\in\mathbb{N}_0$ we denote, for the rest of the paper,
\begin{equation}\label{eq:newmartingales}
f^{(k)}_n := f_{(n\vee T_{k-1})\wedge T_k} - f_{T_{k-1}},
\end{equation}
i.e.
\[ f^{(k)}_n
= \begin{cases} 0 & \text{for } n\leq T_{k-1}, \\
f_n - f_{T_{k-1}} & \text{for } T_{k-1}<n\leq T_k, \\
f_{T_k} - f_{T_{k-1}} & \text{for } n>T_k. \end{cases} \]
That way, for each $k\in\mathbb{N}$ we have now defined a particular martingale $f^{(k)}:=(f^{(k)}_n)_{n=0}^{\infty}$ with respect to the filtration $\calF^{(k)}$ given by \eqref{eq:newfiltrations}. It is ``interesting'' only for moments between $T_{k-1}$ and $T_k$. Consequently, the sequence $(f^{(k)}_n)_{n=0}^{\infty}$ eventually becomes constant and, in particular, the limit $f^{(k)}_\infty := \lim_{n\to\infty} f^{(k)}_n$ exists (in every possible way) and simply equals $f_{T_k} - f_{T_{k-1}}$.
Many classical inequalities in terms of martingale $f$ have their vector-valued extensions in terms of its ``localized pieces'' $f^{(k)}$. Our next goal is to formulate and prove a couple of those, as they will be needed in the next section.

\begin{lemma}\label{lm:auxineq}
Let $(T_k)_{k=0}^{\infty}$ be an increasing sequence of bounded stopping times and let $f$ be a martingale, both with respect to $\calF$. Moreover, let $(f^{(k)})_{k=1}^{\infty}$ be a sequence of martingales defined by \eqref{eq:newmartingales}.
\begin{enumerate}[(a)]
\item\label{it:auxin:a} For any $p\in(1,\infty)$ we have
\begin{equation}\label{eq:l2Doob}
\norm[\big]{ \textup{M}f^{(k)} }_{\LL^p(\ell^2_k)} \lesssim_p \norm{f}_{\LL^p}.
\end{equation}
\item\label{it:auxin:b} For any $p\in(1,\infty)$ we have
\begin{equation}\label{eq:l2Square}
\norm[\big]{ \textup{S}f^{(k)} }_{\LL^p(\ell^2_k)} \lesssim_p \norm{f}_{\LL^p}.
\end{equation}
\end{enumerate}
\end{lemma}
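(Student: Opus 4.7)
For part \eqref{it:auxin:a}, the plan is to reduce to an $\ell^2$-valued Doob inequality via Lemma~\ref{lm:lqdoobaux}. Each $f^{(k)}$ is a martingale with respect to $\calF^{(k)}$ that eventually becomes constant (since $T_k$ is bounded), so applying Lemma~\ref{lm:lqdoobaux} with $q=2$ gives
\[
\norm[\big]{ \textup{M}f^{(k)} }_{\LL^p(\ell^2_k)} \lesssim_p \norm[\big]{ f^{(k)}_{\infty} }_{\LL^p(\ell^2_k)} = \norm[\bigg]{ \Big( \sum_{k=1}^{\infty} \abs{f_{T_k}-f_{T_{k-1}}}^2 \Big)^{1/2} }_{\LL^p}.
\]
Now the key observation is that the stopped process $g_m := f_{T_m}$ is a martingale with respect to the filtration $(\calF_{T_m})_{m\geq 0}$ by the optional sampling theorem, and the right-hand side above is precisely $\norm{\textup{S}g}_{\LL^p}$. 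By the Burkholder--Davis--Gundy inequality \eqref{eq:BDG} applied to $g$, this is comparable to $\norm{\textup{M}g}_{\LL^p}$, which is bounded by $\norm{\textup{M}f}_{\LL^p}$ since $(T_m)$ takes values in $\mathbb{N}_0$; a final application of Doob's inequality \eqref{eq:Doob} yields the desired bound $\lesssim_p \norm{f}_{\LL^p}$.

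For part \eqref{it:auxin:b}, the plan is much more direct. From the definition of $f^{(k)}$ in \eqref{eq:newmartingales} one has
\[
\dif f^{(k)}_n = \begin{cases} \dif f_n & \text{if } T_{k-1}<n\leq T_k,\\ 0 & \text{otherwise.}\end{cases}
\]
Since the intervals $(T_{k-1},T_k]\cap\mathbb{N}$ are pairwise disjoint (possibly empty if $T_{k-1}=T_k$), each positive integer $n$ belongs to at most one of them, so
\[
\sum_{k=1}^{\infty} (\textup{S}f^{(k)})^2 = \sum_{k=1}^{\infty} \sum_{T_{k-1}<n\leq T_k} \abs{\dif f_n}^2 \leq \sum_{n=1}^{\infty} \abs{\dif f_n}^2 = (\textup{S}f)^2
\]
pointwise. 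Taking $\LL^p$ norms of the square root and invoking \eqref{eq:BDG} together with \eqref{eq:Doob} delivers the bound by $\norm{f}_{\LL^p}$.

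There is no serious obstacle here; the only point requiring attention is verifying the hypotheses of Lemma~\ref{lm:lqdoobaux} in part \eqref{it:auxin:a}, namely that each $f^{(k)}$ is a martingale with respect to its own filtration $\calF^{(k)}$ and eventually stabilizes, both of which follow immediately from the boundedness of $T_k$ and the construction \eqref{eq:newmartingales}. The somewhat clever step is recognizing the $\ell^2$-sum of square-differences $|f_{T_k}-f_{T_{k-1}}|^2$ as the square function of the stopped martingale $(f_{T_m})_{m\geq 0}$, which then falls within the scope of classical scalar BDG.
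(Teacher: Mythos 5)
Your proof is correct and follows essentially the same route as the paper: part \eqref{it:auxin:a} via Lemma~\ref{lm:lqdoobaux} with $q=2$ combined with the identification of $\norm{f^{(k)}_\infty}_{\LL^p(\ell^2_k)}$ as the square function of the optionally sampled martingale $(f_{T_m})_{m\geq0}$, then \eqref{eq:BDG} and \eqref{eq:Doob}; part \eqref{it:auxin:b} via the pointwise domination $\sum_k (\textup{S}f^{(k)})^2 \leq (\textup{S}f)^2$ followed by the same two classical inequalities. No gaps.
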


\begin{proof}[Proof of Lemma~\ref{lm:auxineq}.]
\eqref{it:auxin:a} Since the stopping times $T_k$ are bounded, using the optional sampling theorem (see Section 12.4 of the book \cite{MR2059709}) and applying \eqref{eq:BDG} and \eqref{eq:Doob} to the ``optionally sampled'' martingale $(f_{T_n})_{n=0}^{\infty}$ we get
\begin{align*}
\norm[\big]{ f^{(k)}_{\infty} }_{\LL^p(\ell^2_k)} & = \norm[\Big]{ \Big( \sum_{k=1}^{\infty} \abs[\big]{ f^{(k)}_{\infty} }^2 \Big)^{1/2} }_{\LL^p}
= \norm[\Big]{ \Big( \sum_{k=1}^{\infty} \abs[\big]{ f_{T_k} - f_{T_{k-1}} }^2 \Big)^{1/2} }_{\LL^p} \\
& = \norm{\textup{S}(f_{T_k})_{k=0}^{\infty}}_{\LL^p} \lesssim_p \norm{\textup{M}(f_{T_k})_{k=0}^{\infty}}_{\LL^p} \lesssim_p \norm{f}_{\LL^p}.
\end{align*}
Combining this with estimate \eqref{eq:lqdoobaux} from Lemma~\ref{lm:lqdoobaux} specialized to $q=2$ establishes \eqref{eq:l2Doob}.

\eqref{it:auxin:b} Estimate \eqref{eq:l2Square} is immediate. We only need to observe
\[ \textup{S}f^{(k)} = \Big( \sum_{T_{k-1}<n\leq T_{k}} \abs{\dif f_n}^2 \Big)^{1/2} \]
and
\[ \Big( \sum_{k=1}^{\infty} \big( \textup{S}f^{(k)} \big)^2 \Big)^{1/2} \leq \textup{S}f, \]
and then apply \eqref{eq:BDG} and \eqref{eq:Doob}:
\[ \norm[\big]{ \textup{S}f^{(k)} }_{\LL^p(\ell^2_k)} \leq \norm{\textup{S}f}_{\LL^p} \lesssim_p \norm{\textup{M}f}_{\LL^p} \lesssim_p \norm{f}_{\LL^p}. \qedhere \]
\end{proof}

\subsection{Multilinear interpolation}
We will repeatedly use a multilinear version of the Marcinkiewicz interpolation theorem.
We caution the reader that many such results exist in the literature, and not every version would be adequate for our purposes.
We refer to \cite[Corollary 1.1]{MR2876406}, of which the result below is a special case, although it also follows e.g.\ from the result of \cite{MR942274} on abstract interpolation spaces.
\begin{theorem}\label{thm:interpolation}
Let $T$ be a bisublinear operator, i.e., $\abs{T(f_{1}+f_{2},g)} \leq \abs{T(f_{1},g)} + \abs{T(f_{2},g)}$ and $\abs{T(f,g_{1}+g_{2})} \leq \abs{T(f,g_{1})} + \abs{T(f,g_{2})}$, initially defined on simple functions on a pair of measure spaces with values in measurable functions on a third measure space.
Suppose that the estimate
\begin{equation}\label{eq:interpolation:hypothesis}
\norm{T(f,g)}_{\LL^{r,\infty}}
\leq C \norm{f}_{\LL^{p}} \norm{g}_{\LL^{q}}
\end{equation}
holds with $0 < p,q \leq \infty$, $1/r = 1/p+1/q$, and $(1/p,1/q)$ being the corners of a non-degenerate triangle $\Delta \subset [0,\infty)^{2}$.
Then for every $0 < p,q \leq \infty$ such that $(1/p,1/q)$ is in the interior of $\Delta$ and for $1/r = 1/p+1/q$ we have
\begin{equation*}
\norm{T(f,g)}_{\LL^{r}}
\leq C \norm{f}_{\LL^{p}} \norm{g}_{\LL^{q}},
\end{equation*}
where the constant $C$ depends only on $\Delta,p,q$, and the constants in \eqref{eq:interpolation:hypothesis}.
\end{theorem}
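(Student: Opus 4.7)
The plan is to follow the classical real-interpolation scheme based on the distribution function.  Writing the $\LL^r$-quasinorm via the layer-cake formula
\[
\norm{T(f,g)}_{\LL^r}^r = r \int_0^\infty \lambda^{r-1} \mathbb{P}\big(\abs{T(f,g)} > \lambda\big) \dif\lambda,
\]
I would decompose each argument at level-dependent thresholds $\alpha(\lambda)$ and $\beta(\lambda)$: write $f = f^{\alpha}_{<} + f^{\alpha}_{>}$ with $f^{\alpha}_{<} := f \, \mathbf{1}_{\{\abs{f}\leq\alpha\}}$, and analogously for $g$.  Bisublinearity then bounds $\abs{T(f,g)}$ by the sum of the four expressions $\abs{T(f^{\alpha}_{\sigma}, g^{\beta}_{\tau})}$ with $(\sigma,\tau) \in \{<,>\}^{2}$, so it suffices to estimate $\mathbb{P}\big(\abs{T(f^{\alpha}_{\sigma},g^{\beta}_{\tau})} > \lambda/4\big)$ for each pair and integrate in $\lambda$.

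For each of the four pieces I would invoke the weak-type hypothesis at a carefully chosen corner of $\Delta$.  Combining Chebyshev with the weak-type inequality $\norm{T(\cdot,\cdot)}_{\LL^{r_i,\infty}} \leq C \norm{\cdot}_{\LL^{p_i}} \norm{\cdot}_{\LL^{q_i}}$ and the standard truncation identities $\norm{f^\alpha_{<}}_{\LL^{p_i}}^{p_i} \lesssim \alpha^{p_i-p}\norm{f}_{\LL^p}^p$ (when $p_i > p$) and $\norm{f^\alpha_{>}}_{\LL^{p_i}}^{p_i} \lesssim \alpha^{p_i-p}\norm{f}_{\LL^p}^p$ (when $p_i < p$), each of the four $\lambda$-integrands becomes an explicit power of $\lambda$ multiplied by powers of $\norm{f}_{\LL^p}$, $\norm{g}_{\LL^q}$, $\alpha$, and $\beta$.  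I would then take $\alpha(\lambda), \beta(\lambda)$ to be power functions of $\lambda$, tuned so that after substitution the $\alpha, \beta$ dependencies cancel and the remaining $\lambda$-integrals converge at both $\lambda\to 0^{+}$ and $\lambda \to \infty$.  The identities $1/r_{i} = 1/p_{i} + 1/q_{i}$ at each corner and $1/r = 1/p + 1/q$ at the target are what force all exponents to balance into the target $\norm{f}_{\LL^p}^{r} \norm{g}_{\LL^q}^{r}$.

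The main obstacle is combinatorial rather than analytic: there are four sub/super-level pieces but only three corners, so some corner must be used more than once, and the assignment must simultaneously satisfy the sign requirements $p_i > p$ or $p_i < p$ (and the analogous one in the $q$ variable) imposed by the truncation bounds.  This is precisely where non-degeneracy of $\Delta$ enters --- when $(1/p,1/q)$ lies in the open triangle, there is genuine two-dimensional room to pick, for each of the four pieces, a corner with the requisite component-wise inequalities, and the associated $\lambda$-exponents come out with the correct signs at both endpoints of the integral; this freedom collapses exactly when $(1/p,1/q)$ slides to the boundary of $\Delta$, which is why the conclusion is restricted to the open interior.  Once this bookkeeping is set up, summing the four integrated contributions yields the strong-type bound $\norm{T(f,g)}_{\LL^{r}} \leq C \norm{f}_{\LL^p} \norm{g}_{\LL^q}$ with constant depending only on $\Delta$ and the weak-type constants.
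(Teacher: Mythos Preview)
The paper does not give its own proof of this theorem: it is quoted as a special case of \cite[Corollary~1.1]{MR2876406} (with an alternative reference to Janson \cite{MR942274}), so there is no in-paper argument to compare against.  Your outline is in the right spirit --- layer-cake plus level-dependent truncations is indeed how the cited proofs proceed --- but it contains a genuine gap at the step you flag as ``combinatorial rather than analytic.''

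Your claim that non-degeneracy of $\Delta$ guarantees, for each of the four pieces $(f^{\alpha}_{\sigma},g^{\beta}_{\tau})$, a corner satisfying the required componentwise inequalities is false.  The condition you need for $(f_{<},g_{<})$ is a corner with $1/p_i<1/p$ and $1/q_i<1/q$; for $(f_{>},g_{>})$ a corner with $1/p_i>1/p$ and $1/q_i>1/q$; and so on.  These are the four open quadrants centred at $(1/p,1/q)$, and three vertices can occupy at most three of them.  Concretely, take $\Delta$ with vertices $(0,0),(1,0),(0,1)$ and target $(1/p,1/q)=(1/4,1/4)$: no vertex lies in the north-east quadrant, so your scheme produces no usable bound for the piece $(f_{>},g_{>})$.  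This is not a degeneracy issue; it happens for every triangle and every interior point, by pigeonhole.

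The published proofs handle the ``missing quadrant'' differently.  One route (essentially Janson's) first interpolates along an edge of $\Delta$ --- freezing one argument, this is the linear Marcinkiewicz theorem and yields strong-type bounds on the whole open segment --- and then interpolates between that segment and the opposite vertex, so that at the second step one has a continuum of intermediate estimates and can always find one in the needed direction.  Another route replaces your crude truncation bounds $\norm{f^{\alpha}_{<}}_{\LL^{p_i}}^{p_i}\lesssim\alpha^{p_i-p}\norm{f}_{\LL^p}^{p}$ by the full distribution-function identity and carries the $\alpha$-integral through Fubini; with that finer bookkeeping a single corner can control two adjacent pieces simultaneously.  Either fix is what is actually needed to close the argument.
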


\section{A vector-valued estimate for martingale paraproducts}
The main ingredient in the proof of Theorem~\ref{thm:main} is the following proposition.

\begin{proposition}\label{prop:l1pprod}
Let $(T_k)_{k=0}^{\infty}$ be an increasing sequence of bounded stopping times and let $f$ and $g$ be martingales, all with respect to the same filtration $\calF$. Moreover, let $(f^{(k)})_{k=1}^{\infty}$ and $(g^{(k)})_{k=1}^{\infty}$ be sequences of martingales defined from $f$ and $g$, respectively, via \eqref{eq:newmartingales}.
Then for any exponents $p,q,r$ satisfying \eqref{eq:exponents} we have the estimate
\begin{equation}\label{eq:l1pprod}
\norm[\big]{ \Pi_{\infty}(f^{(k)},g^{(k)}) }_{\LL^r(\ell^1_k)} \lesssim_{p,q} \norm{f}_{\LL^p} \norm{g}_{\LL^q}.
\end{equation}
\end{proposition}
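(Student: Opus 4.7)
The plan is to first prove \eqref{eq:l1pprod} for $r\geq 1$ by a duality argument built on the tools of Section~2, and then to cover the remaining range $r\in(1/2,1)$ via the multilinear interpolation Theorem~\ref{thm:interpolation}. The key preliminary observation is that, for each fixed $k$, the process $(\Pi_{n}(f^{(k)},g^{(k)}))_{n=0}^{\infty}$ is itself a martingale with respect to $\calF^{(k)}$, and its square function obeys the pointwise bound $\textup{S}\Pi(f^{(k)},g^{(k)}) \leq \textup{M}f^{(k)}\cdot\textup{S}g^{(k)}$. This is what ultimately couples the problem to Lemma~\ref{lm:auxineq}.

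For $r\geq 1$ I would dualize the $\LL^{r}(\ell^{1}_{k})$-norm against a nonnegative $w$ with $\norm{w}_{\LL^{r'}}\leq 1$, so that the task reduces to bounding $\sum_{k}\EE{\abs{\Pi_{\infty}(f^{(k)},g^{(k)})}w}$ uniformly in such $w$. For each fixed $k$, the weighted Burkholder--Davis--Gundy inequality \eqref{eq:BDGweighted} applied to $\Pi(f^{(k)},g^{(k)})$ relative to $\calF^{(k)}$ gives
\[
\EE{\abs{\Pi_{\infty}(f^{(k)},g^{(k)})}w} \leq \EE{\textup{M}\Pi(f^{(k)},g^{(k)})w} \lesssim \EE{\textup{S}\Pi(f^{(k)},g^{(k)})\,\textup{M}^{(k)}w},
\]
where $\textup{M}^{(k)}w$ is the $\calF^{(k)}$-maximal function of $w$; since $\calF^{(k)}_{n}\subseteq\calF_{n\vee T_{k-1}}$, one has $\textup{M}^{(k)}w\leq\textup{M}w$ pointwise. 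Inserting the pointwise control on $\textup{S}\Pi$, summing in $k$, and then applying Cauchy--Schwarz in $k$ followed by H\"{o}lder in $\omega$ with exponents $(p,q,r')$ reduces everything to
\[
\norm[\Big]{\bigl(\textstyle\sum_{k}(\textup{M}f^{(k)})^{2}\bigr)^{1/2}}_{\LL^{p}} \norm[\Big]{\bigl(\textstyle\sum_{k}(\textup{S}g^{(k)})^{2}\bigr)^{1/2}}_{\LL^{q}} \norm{\textup{M}w}_{\LL^{r'}},
\]
which is handled by Lemma~\ref{lm:auxineq}(a),(b) together with Doob's inequality \eqref{eq:Doob} (applicable since $r'>1$ when $r<\infty$). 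Taking the supremum over $w$ settles \eqref{eq:l1pprod} in the range $r\in[1,\infty)$.

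For $r\in(1/2,1)$ quasi-Banach duality is unavailable, and I would instead apply Theorem~\ref{thm:interpolation} to the bisublinear operator $T(f,g):=(\Pi_{\infty}(f^{(k)},g^{(k)}))_{k=1}^{\infty}$, viewed as $\ell^{1}$-valued. The strong bounds just proved cover the subregion $\{(1/p,1/q):1/p+1/q\leq 1\}$ of the target $(0,1)^{2}$, and only the complementary part $\{1/p+1/q>1\}$ remains. The natural interpolation triangle has vertices $(1,0)$, $(0,1)$, $(1,1)$; at these corners the required inputs are the weak-type endpoint bounds $\LL^{1}\times\LL^{\infty}\to\LL^{1,\infty}(\ell^{1}_{k})$, its symmetric counterpart, and $\LL^{1}\times\LL^{1}\to\LL^{1/2,\infty}(\ell^{1}_{k})$.

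The main obstacle is establishing these three weak-type endpoints, especially the one at $(1,1)$: none follows from the Stage~1 argument, because the constants there blow up as $p$ or $q$ approach $1$. The standard remedy is a Gundy-style decomposition of each of $f$ and $g$ into a bounded part, a part with controlled $\LL^{1}$-norm, and an exceptional piece supported on a set of small probability, followed by a case analysis of the resulting nine bilinear pieces using Stage~1 together with the maximal- and square-function estimates of Section~2.
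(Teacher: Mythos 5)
Your first stage (the case $r\geq 1$) is correct and is essentially the paper's argument: dualize against $w\in\LL^{r'}$, apply the weighted Burkholder--Davis--Gundy inequality \eqref{eq:BDGweighted} to each martingale $\Pi(f^{(k)},g^{(k)})$, use the pointwise bound $\textup{S}\Pi(f^{(k)},g^{(k)})\leq \textup{M}f^{(k)}\,\textup{S}g^{(k)}$, and finish with Cauchy--Schwarz in $k$, H\"older, Lemma~\ref{lm:auxineq}, and Doob's inequality for $\textup{M}w$. (The inequality $\textup{M}w^{(k)}\leq\textup{M}w$ is best justified by optional sampling, $w^{(k)}_n=w_{(n\vee T_{k-1})\wedge T_k}$, rather than by the $\sigma$-algebra inclusion alone, but this is cosmetic.)

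The second stage has a genuine gap. You correctly see that the region $1/p+1/q>1$ must be reached by interpolating weak-type endpoints, but you stop at naming the three corners $\LL^{1}\times\LL^{\infty}$, $\LL^{\infty}\times\LL^{1}$, $\LL^{1}\times\LL^{1}$ and asserting that a Gundy decomposition of both $f$ and $g$ with ``nine bilinear pieces'' would handle them. None of these endpoint bounds is proved, and they are not free: $\ell^{1}$ is not UMD, so even the $\LL^{\infty}\times\LL^{1}\to\LL^{1,\infty}(\ell^1_k)$ bound for this $\ell^{1}$-valued transform needs an argument, and the corner $(1,1)$ with its double decomposition is delicate (the two truncation heights must be tuned jointly against the level $\nu$). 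Moreover, that corner is unnecessary. The paper proves only the one-parameter family of weak endpoints $\norm{\Pi_\infty(f^{(k)},g^{(k)})}_{\LL^{r,\infty}(\ell^1_k)}\lesssim_p\norm{f}_{\LL^p}\norm{g}_{\LL^1}$ for $p\in(1,\infty)$, $1/r=1/p+1$, by Gundy-decomposing $g$ alone at height $\alpha=\nu^{r}$: the good part lies in $\LL^\infty\cap\LL^1\subseteq\LL^{p'}$ so Stage~1 applies; the bad part is supported on a set of probability $\lesssim\alpha^{-1}$; and the harmless part is controlled by the crude bound $\sum_k\abs{\Pi_\infty(f^{(k)},g^{\textup{harmless},(k)})}\lesssim(\textup{M}f)\sum_n\abs{\dif g^{\textup{harmless}}_n}$. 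Since Theorem~\ref{thm:interpolation} only requires the target point to lie in the interior of some triangle of weak-type points, triangles with two vertices on the segment $\{1/q=1,\ 0<1/p<1\}$ and one vertex in the already-established region $\{1/p+1/q\le 1\}$ cover all of $\{1/p+1/q>1\}\cap(0,1)^2$; the symmetric endpoint and the corner $(1,1)$ are never needed. To complete your proof you would either have to carry out the double decomposition in detail or, better, switch to this one-sided endpoint.
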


Note that, for each $k\in\mathbb{N}$, the paraproduct $\Pi(f^{(k)},g^{(k)})$ is a martingale with respect to the filtration $\calF^{(k)}$ given by \eqref{eq:newfiltrations}. The sequence $(\Pi_n(f^{(k)},g^{(k)}))_{n=0}^{\infty}$ eventually becomes constant, so that $\Pi_{\infty}(f^{(k)},g^{(k)})$ makes sense. A crucial observation, following from \eqref{eq:pprodaltexp} and needed later, is
\begin{equation}\label{eq:pprodpieces}
\Pi_{\infty}(f^{(k)},g^{(k)}) = \sum_{T_{k-1}<i<j\leq T_{k}} \dif f_i \dif g_j
\end{equation}
and these are precisely the truncated paraproducts appearing on the left hand sides of estimates \eqref{eq:2variation} and \eqref{eq:2jump} if we replace each $n_k$ with $T_k$.

\begin{proof}[Proof of Proposition~\ref{prop:l1pprod}.]
Let us first discuss the case $r\geq 1$ of estimate \eqref{eq:l1pprod}. We begin by proving the $\ell^1$-valued estimate
\begin{equation}\label{eq:l1BDG}
\norm[\big]{ \Pi_{\infty}(f^{(k)},g^{(k)}) }_{\LL^r(\ell^1_k)} \lesssim_p \norm[\big]{ \textup{S}\Pi(f^{(k)},g^{(k)}) }_{\LL^r(\ell^1_k)}.
\end{equation}
We will reduce it to the weighted estimate \eqref{eq:BDGweighted} for martingales $\Pi(f^{(k)},g^{(k)})$.
Take an arbitrary nonnegative random variable satisfying $\norm{w}_{\LL^{r'}} = 1$ and define $(w_n)_{n=0}^{\infty}$ and $w^{(k)}=(w^{(k)}_n)_{n=0}^{\infty}$ as in \eqref{eq:maxfnsofw}. We have
\begin{align*}
& \EE[\bigg]{ \Big( \sum_{k=1}^{\infty} \abs[\big]{\Pi_{\infty}(f^{(k)},g^{(k)})} \Big) w }
\leq \EE[\bigg]{ \Big( \sum_{k=1}^{\infty} \textup{M}\Pi(f^{(k)},g^{(k)}) \Big) w } \\
& = \sum_{k=1}^{\infty} \EE[\bigg]{ \big(\textup{M}\Pi(f^{(k)},g^{(k)})\big) w }
= \sum_{k=1}^{\infty} \norm[\big]{ \textup{M}\Pi(f^{(k)},g^{(k)}) }_{\LL^{1}(w)}
\end{align*}
and, by \eqref{eq:BDGweighted} applied to martingale $\Pi(f^{(k)},g^{(k)})$ for each fixed $k$, this is at most a constant times
\begin{align*}
& \sum_{k=1}^{\infty} \norm[\big]{ \textup{S}\Pi(f^{(k)},g^{(k)}) }_{\LL^{1}(\textup{M}w^{(k)})}
= \sum_{k=1}^{\infty} \EE[\bigg]{ \big(\textup{S}\Pi(f^{(k)},g^{(k)})\big) \big(\textup{M}w^{(k)}\big) } \\
& \leq \EE[\bigg]{ \Big(\sum_{k=1}^{\infty}\textup{S}\Pi(f^{(k)},g^{(k)})\Big) \big(\textup{M} w\big) }
\leq \norm[\big]{ \textup{S}\Pi(f^{(k)},g^{(k)}) }_{\LL^r(\ell^1_k)} \norm[\big]{\textup{M}w}_{\LL^{r'}}.
\end{align*}
Using Doob's inequality \eqref{eq:Doob} for the martingale $(w_n)_{n=0}^{\infty}$ we end up with
\[ \EE[\bigg]{ \Big( \sum_{k=1}^{\infty} \abs[\big]{\Pi_{\infty}(f^{(k)},g^{(k)})} \Big) w }
\lesssim r
\norm[\big]{ \textup{S}\Pi(f^{(k)},g^{(k)}) }_{\LL^r(\ell^1_k)}. \]
Recalling the freedom that we had in choosing $w$ we establish \eqref{eq:l1BDG} by dualization.

In order to complete the proof of \eqref{eq:l1pprod} in the case $r\geq 1$, observe that the expression on the right hand side of \eqref{eq:l1BDG} is, by the definition of the paraproduct, equal to
\[ \norm[\Big]{ \sum_{k=1}^{\infty} \Big( \sum_{n=1}^{\infty} (f^{(k)}_{n-1})^2 (\dif g^{(k)}_{n})^2 \Big)^{1/2} }_{\LL^r}
\leq \norm[\Big]{ \sum_{k=1}^{\infty} \textup{M}f^{(k)} \textup{S}g^{(k)} }_{\LL^r}, \]
which is, by H\"{o}lder's inequality, in turn bounded by
\[ \norm[\big]{ \textup{M}f^{(k)} }_{\LL^p(\ell^2_k)} \norm[\big]{ \textup{S}g^{(k)} }_{\LL^q(\ell^2_k)}
\lesssim_{p,q} \norm{f}_{\LL^p} \norm{g}_{\LL^q}. \]
In the last inequality we used \eqref{eq:l2Doob} and \eqref{eq:l2Square} for the martingales $f$ and $g$, respectively.

We will now prove the weak-type estimate
\begin{equation}\label{eq:weakest}
\norm[\big]{ \Pi_{\infty}(f^{(k)},g^{(k)}) }_{\LL^{r,\infty}(\ell^1_k)} \lesssim_{p} \norm{f}_{\LL^p} \norm{g}_{\LL^1}
\end{equation}
for any $p\in(1,\infty)$ and $r\in(1/2,1)$ such that $1/p+1=1/r$.
This will conclude the proof of \eqref{eq:l1pprod} for $r<1$ by real interpolation with the previously established cases (Theorem~\ref{thm:interpolation}).
By the homogeneity of \eqref{eq:weakest} we can normalize: assume $\norm{f}_{\LL^p}=1$ and $\norm{g}_{\LL^1}=1$.
Fix a number $\nu>0$ and perform Gundy's decomposition \cite{MR0221573} of the martingale $g$ at height $\alpha=\nu^{r}$; see its formulation as Theorem~3.4.1 in the book \cite{MR3617205}.
It splits $g$ as
\[ g_n = g^{\textup{good}}_n + g^{\textup{bad}}_n + g^{\textup{harmless}}_n, \]
where $g^{\textup{good}}=(g^{\textup{good}}_n)_{n=0}^{\infty}$, $g^{\textup{bad}}=(g^{\textup{bad}}_n)_{n=0}^{\infty}$, and $g^{\textup{harmless}}=(g^{\textup{harmless}}_n)_{n=0}^{\infty}$ are martingales with respect to $\calF$ satisfying
\begin{align}
& g^{\textup{good}}_0 = g_0, \quad g^{\textup{bad}}_0 = g^{\textup{harmless}}_0 = 0, \nonumber \\
& \norm{g^{\textup{good}}}_{\LL^\infty}\leq 2\alpha, \quad \norm{g^{\textup{good}}}_{\LL^1}\leq 4\norm{g}_{\LL^1}, \label{eq:gundy1} \\
& \mathbb{P}(\textup{M}g^{\textup{bad}}>0) \leq 3 \alpha^{-1} \norm{g}_{\LL^1}, \label{eq:gundy2} \\
& \sum_{n=1}^{\infty} \norm{\dif g^{\textup{harmless}}_n}_{\LL^1} \leq 4 \norm{g}_{\LL^1}. \label{eq:gundy3}
\end{align}
Construct the martingales $g^{\textup{good},(k)}$, $g^{\textup{bad},(k)}$, and $g^{\textup{harmless},(k)}$ for the given sequence of stopping times via the formula \eqref{eq:newmartingales}.
Using the previously established case $r=1$ of estimate \eqref{eq:l1pprod} and \eqref{eq:gundy1} we obtain
\begin{align*}
& \mathbb{P}\Big(\sum_{k=1}^{\infty} \abs[\big]{\Pi_{\infty}(f^{(k)},g^{\textup{good},(k)})} > \frac{\nu}{2} \Big)
\lesssim \nu^{-1} \norm[\Big]{ \sum_{k=1}^{\infty} \abs[\big]{\Pi_{\infty}(f^{(k)},g^{\textup{good},(k)})} }_{\LL^1} \\
& \lesssim_p \nu^{-1} \norm{f}_{\LL^p} \norm{g^{\textup{good}}}_{\LL^{p'}}
\leq \nu^{-1} \norm{f}_{\LL^p} \norm{g^{\textup{good}}}_{\LL^{\infty}}^{1/p} \norm{g^{\textup{good}}}_{\LL^{1}}^{1/p'}
\lesssim \nu^{-1} \nu^{r/p} = \nu^{-r}.
\end{align*}
Next, \eqref{eq:gundy2} yields
\[ \mathbb{P}\Big(\sum_{k=1}^{\infty} \abs[\big]{\Pi_{\infty}(f^{(k)},g^{\textup{bad},(k)})} > 0 \Big)
\leq \mathbb{P}(\textup{M}g^{\textup{bad}}>0) \lesssim \nu^{-r}. \]
Finally, by H\"{o}lder's inequality, Doob's inequality \eqref{eq:Doob}, and \eqref{eq:gundy3} we conclude
\begin{align*}
& \mathbb{P}\Big(\sum_{k=1}^{\infty} \abs[\big]{\Pi_{\infty}(f^{(k)},g^{\textup{harmless},(k)})} > \frac{\nu}{2} \Big)
\lesssim_r \nu^{-r} \norm[\Big]{ \sum_{k=1}^{\infty} \abs[\big]{\Pi_{\infty}(f^{(k)},g^{\textup{harmless},(k)})} }_{\LL^r}^r \\
& \leq \nu^{-r} \norm[\Big]{ \sum_{k=1}^{\infty} \sum_{T_{k-1}<j\leq T_k} \abs{f_{j-1}-f_{T_{k-1}}} \abs{\dif g^{\textup{harmless}}_j} }_{\LL^r}^r \\
& \lesssim \nu^{-r} \norm[\Big]{ (\textup{M}f) \sum_{n=1}^{\infty} \abs{\dif g^{\textup{harmless}}_n} }_{\LL^r}^r
\leq \nu^{-r} \norm{\textup{M}f}_{\LL^p}^r \norm[\Big]{ \sum_{n=1}^{\infty} \abs{\dif g^{\textup{harmless}}_n} }_{\LL^1}^r
\lesssim_{p,r} \nu^{-r}.
\end{align*}
Combining the above three estimates finishes the proof of \eqref{eq:weakest}.
\end{proof}

\section{Proof of variational and jump inequalities}

\begin{proof}[Proof of Theorem~\ref{thm:main}.]
In the process of proving estimates \eqref{eq:2variation} and \eqref{eq:2jump} we can constrain the numbers $n_0,n_1,\ldots,n_m$ to a finite interval of integers $\Set{0,1,2,\ldots,n_{\textup{max}}}$. Then we only need to take care that the obtained constants do not depend on $n_{\textup{max}}$. Afterwards we will be able to let $n_{\textup{max}}\to\infty$ and use the monotone convergence theorem, recovering Theorem~\ref{thm:main} in its full generality.

Let us begin with a stopping time argument enabling us to apply Proposition~\ref{prop:l1pprod}. We are given two martingales, $f=(f_n)_{n=0}^{\infty}$ and $g=(g_n)_{n=0}^{\infty}$, with respect to the filtration $\calF$. Fix $\lambda>0$ and recursively define an increasing sequence of stopping times $(S_k)_{k=0}^{\infty}$ by setting $S_0:=0$ and
\begin{multline*}
S_k := \min\bigg\{ n>S_{k-1} \bigg| \abs[\Big]{\sum_{S_{k-1}<i<j\leq n} \dif f_i \dif g_j} \geq \frac{\lambda}{3} \text{ or } \\
\max_{n'\in(S_{k-1},n]}\abs[\big]{f_{n'}-f_{S_{k-1}}}\abs[\big]{g_n-g_{n'}} \geq\frac{\lambda}{3} \bigg\},
\end{multline*}
with the convention $\min\emptyset=\infty$. Then for each $k\in\mathbb{N}_0$ set $T_k := S_k \wedge n_{\textup{max}}$.

Denote by $N_{\lambda}(f,g)$ the supremum of cardinalities on the left hand side of \eqref{eq:2jump}, so that the desired estimate \eqref{eq:2jump} becomes
\[ \norm{ N_{\lambda}(f,g) }_{\LL^r} \lesssim_{p,q} \lambda^{-1} \norm{f}_{\LL^p} \norm{g}_{\LL^q}. \]
On the other hand, denote
\[ \widetilde{N}_{\lambda}(f,g) := \sup \Set{ k\in\mathbb{N}_0 \given S_k \leq n_{\textup{max}} }. \]
Let us show that
\begin{equation}\label{eq:comparingjumps}
N_{\lambda}(f,g) \leq \widetilde{N}_{\lambda}(f,g)
\end{equation}
and for this it is sufficient to show that each interval of integers $(n',n'']\subseteq(0,n_{\textup{max}}]$ such that
\begin{equation}\label{eq:lambdajump}
\abs[\Big]{ \sum_{n'<i<j\leq n''} \dif f_i \dif g_j } \geq \lambda
\end{equation}
has to contain at least one of the stopping times $(S_k)_{k=1}^{\infty}$. If that was not the case, then we could choose an index $k\in\mathbb{N}$ such that $S_{k-1} \leq n' < n'' < S_k$, where we allow $S_k$ to be infinite.
Let us use the identity
\begin{align*}
\sum_{n'<i<j\leq n''} \dif f_i \dif g_j =\
& \sum_{S_{k-1}<i<j\leq n''} \dif f_i \dif g_j - \sum_{S_{k-1}<i<j\leq n'} \dif f_i \dif g_j \\
& - \big(f_{n'}-f_{S_{k-1}}\big)\big(g_{n''}-g_{n'}\big)
\end{align*}
and the fact that $S_k$ is strictly larger than $n'$ and $n''$, which implies that each of the three terms on the right hand side is strictly less than $\lambda/3$ in the absolute value.
That way we arrive at a contradiction with \eqref{eq:lambdajump} and complete the proof of \eqref{eq:comparingjumps}.

We plan to apply Proposition~\ref{prop:l1pprod} with the above sequence of stopping times $(T_k)_{k=0}^{\infty}$.
By the definitions of $S_k$ and $T_k$ we have
\begin{align*}
\lambda \widetilde{N}_{\lambda}(f,g) \leq\
& 3 \sum_{k=1}^{\widetilde{N}_{\lambda}(f,g)} \abs[\Big]{\sum_{T_{k-1}<i<j\leq T_{k}} \dif f_i \dif g_j} \\
& + 3 \sum_{k=1}^{\widetilde{N}_{\lambda}(f,g)} \max_{n'\in(T_{k-1},T_{k}]}\abs[\big]{f_{n'}-f_{T_{k-1}}}\abs[\big]{g_{T_{k}}-g_{n'}}.
\end{align*}
In the first term above we use \eqref{eq:pprodpieces}, while the second term is bounded by
\begin{align*}
& 6 \sum_{k=1}^{\widetilde{N}_{\lambda}(f,g)} \Big(\max_{n\in[T_{k-1},T_{k}]}\abs[\big]{f_{n}-f_{T_{k-1}}}\Big) \Big(\max_{n\in[T_{k-1},T_{k}]}\abs[\big]{g_{n}-g_{T_{k-1}}}\Big) \\
& \lesssim \sum_{k=1}^{\infty} \big(\textup{M} f^{(k)}\big) \big(\textup{M} g^{(k)}\big).
\end{align*}
Altogether, by the Cauchy--Schwarz inequality,
\[ \lambda \widetilde{N}_{\lambda}(f,g) \lesssim \sum_{k=1}^{\infty} \abs[\big]{ \Pi_{\infty}(f^{(k)},g^{(k)}) }
+ \Big(\sum_{k=1}^{\infty} \big(\textup{M} f^{(k)}\big)^2 \Big)^{1/2} \Big(\sum_{k=1}^{\infty} \big(\textup{M} g^{(k)}\big)^2 \Big)^{1/2}, \]
so using H\"older's inequality, \eqref{eq:l1pprod}, and \eqref{eq:l2Doob} we obtain
\begin{align*}
\norm{ \lambda \widetilde{N}_{\lambda}(f,g) }_{\LL^r}
& \lesssim \norm[\big]{ \Pi_{\infty}(f^{(k)},g^{(k)}) }_{\LL^r(\ell^1_k)} + \norm[\big]{ \textup{M}f^{(k)} }_{\LL^p(\ell^2_k)} \norm[\big]{ \textup{M}g^{(k)} }_{\LL^q(\ell^2_k)} \\
& \lesssim_{p,q} \norm{f}_{\LL^p} \norm{g}_{\LL^q}.
\end{align*}
Recalling \eqref{eq:comparingjumps} we complete the proof of the jump inequality \eqref{eq:2jump}.

By \cite[Lemma~2.17]{arxiv:1808.04592} the jump estimate \eqref{eq:2jump} immediately implies the weak type $\LL^{p}\times\LL^{q}\to\LL^{r,\infty}$ analogue of \eqref{eq:2variation}.
The strong type $\varrho$-variational estimate \eqref{eq:2variation} then follows by real interpolation for multisublinear operators (Theorem~\ref{thm:interpolation}).
\end{proof}

\section{Continuous-time martingales}
\begin{proof}[Proof of Corollary~\ref{cor:main}.]
\eqref{it:cor:a} In the particular case $\norm{X}_{\LL^\infty}<\infty$ and $\norm{Y}_{\LL^2}<\infty$ we already know that $S(X,Y;\Sigma_n)$ converge u.c.p.\@ as $n\to\infty$ to the stochastic process given by \eqref{eq:integraltoapprox}. This is the content of Theorem~21 in Chapter~II of the book \cite{MR2273672}.

In the general case, for any $\delta>0$ we find \cadlag{} martingales $X'=(X'_t)_{t\geq 0}$ and $Y'=(Y'_t)_{t\geq 0}$ with respect to $\mathcal{F}$ such that $\norm{X'}_{\LL^\infty}<\infty$, $\norm{X-X'}_{\LL^p}<\delta$, $\norm{Y'}_{\LL^2}<\infty$, and $\norm{Y-Y'}_{\LL^q}<\delta$. Rewrite the difference
$S_s(X,Y;\Sigma_m) - S_s(X,Y;\Sigma_n)$
as the sum of
\[ S_s(X',Y';\Sigma_m) - S_s(X',Y';\Sigma_n) \]
and
\begin{align}
& S_s(X-X',Y;\Sigma_m) + S_s(X',Y-Y';\Sigma_m) \nonumber \\
& + S_s(X',Y'-Y;\Sigma_n) + S_s(X'-X,Y;\Sigma_n). \label{eq:fourterms}
\end{align}
From the first part of the proof we know
\begin{equation}\label{eq:denseCauchy}
\lim_{m,n\to\infty} \mathbb{P}\Big( \sup_{s\in[0,t]} |S_s(X',Y';\Sigma_m) - S_s(X',Y';\Sigma_n)| > \varepsilon\Big) = 0
\end{equation}
for each $\varepsilon>0$ and each $t>0$.
By sampling arbitrary continuous-time martingales $\widetilde{X}$ and $\widetilde{Y}$ at times $t\wedge\tau^{(n)}_j$ we obtain discrete-time martingales such that $(S_{t\wedge\tau^{(n)}_j}(\widetilde{X},\widetilde{Y};\Sigma_n))_{j=0}^{l_n}$ is their paraproduct. Thus, estimate \eqref{eq:estChaoLongmax} applies and, together with Doob's inequality for $Y$, easily gives
\[ \norm[\Big]{\sup_{s\in[0,t]}\big|S_s(\widetilde{X},\widetilde{Y};\Sigma_n)\big|}_{\LL^r} \lesssim_{p,q} \norm[\big]{\widetilde{X}}_{\LL^p} \norm[\big]{\widetilde{Y}}_{\LL^q}, \]
with a constant independent of the partition $\Sigma_n$.
Applying this to each of the four terms in \eqref{eq:fourterms}, using the Markov--Chebyshev inequality, applying \eqref{eq:denseCauchy}, and finally letting $\delta\to0^+$, we obtain
\[ \limsup_{m,n\to\infty} \mathbb{P}\Big( \sup_{s\in[0,t]} |S_s(X,Y;\Sigma_m) - S_s(X,Y;\Sigma_n)| > \varepsilon\Big) = 0 \]
for $\varepsilon,t>0$.
Thus, $S(X,Y;\Sigma_n)$ converge u.c.p.\@ as $n\to\infty$ to some stochastic process, which we denote by $\Pi(X,Y)$. Note that $\Pi(X,Y)$ still has \cadlag{} paths a.s.\@, since this property is preserved under taking u.c.p.\@ limits. It is standard to conclude that $\Pi(X,Y)$ does not depend on the choice of $(\Sigma_n)_{n=0}^{\infty}$.

\eqref{it:cor:b} We explain how \eqref{eq:2variation} implies \eqref{eq:2variationcont}; very similarly one can use \eqref{eq:2jump} to prove \eqref{eq:2jumpcont}.
It is sufficient to establish a variant of \eqref{eq:2variationcont} in which the numbers $t_0,t_1,\ldots,t_m$ are only taken from a fixed finite set of nonnegative rational numbers $\Sigma$, but with a constant that does not depend on $\Sigma$. Afterwards, we can let those sets $\Sigma$ exhaust $[0,\infty)\cap\mathbb{Q}$, invoking the monotone convergence theorem. At the very end one can recall that $\Pi(X,Y)$ almost surely has \cadlag{} paths, so that $\Pi_{t,t'}(X,Y)$ is almost surely right-continuous in $t$ and $t'$.

Starting with a finite set $\Sigma$ we take an increasing sequence $(\Sigma_n)_{n=0}^{\infty}$ of finite subsets of $[0,\infty)$ with the following properties. If we write explicitly
\[ \Sigma_n = \Set[\big]{a^{(n)}_0, a^{(n)}_1, \ldots, a^{(n)}_{l_n}}, \quad a^{(n)}_0 < a^{(n)}_1 < \cdots < a^{(n)}_{l_n}, \]
then we require $\Sigma_0=\Sigma$, $a^{(n)}_0=0$ for $n\geq 1$, $\lim_{n\to\infty}a^{(n)}_{l_n}=\infty$, and
\[ \lim_{n\to\infty} \max_{1\leq j\leq l_n} \abs[\big]{a^{(n)}_j-a^{(n)}_{j-1}} = 0. \]
From part \eqref{it:cor:a} applied to deterministic partitions $\Sigma_n$ we know that
\begin{align*}
\Pi_{t_{k-1},t_k}(X,Y)
& = \lim_{n\to\infty} \big( S_{t_k}(X,Y;\Sigma_n) - S_{t_{k-1}}(X,Y;\Sigma_n) - X_{t_{k-1}} (Y_{t_k} - Y_{t_{k-1}}) \big) \\
& = \lim_{n\to\infty} \sum_{j\,:\,t_{k-1}<a^{(n)}_j\leq t_k} \big(X_{a^{(n)}_{j-1}} - X_{t_{k-1}}\big) \big(Y_{a^{(n)}_j} - Y_{a^{(n)}_{j-1}}\big)
\end{align*}
in probability. Repeatedly passing to almost surely convergent subsequences, we can assume that we already have almost sure convergence above for each of the finitely many choices of the numbers $t_0<t_1<\cdots<t_m$ from $\Sigma$ and for each $1\leq k\leq m$.
It remains to apply estimate \eqref{eq:2variation} to discrete-time martingales $(X_{a^{(n)}_j})_{j=0}^{l_n}$ and $(Y_{a^{(n)}_j})_{j=0}^{l_n}$ for each fixed $n\in\mathbb{N}$, recognizing their truncated paraproducts in the last display. Then we use Fatou's lemma as $n\to\infty$ to obtain control of the left hand side of \eqref{eq:2variationcont}.
\end{proof}

\section*{Acknowledgments}
V.K.\ was supported in part by the Croatian Science Foundation under the project UIP-2017-05-4129 (MUNHANAP).
P.Z.\ was partially supported by the Hausdorff Center for Mathematics (DFG EXC 2047) and DFG SFB 1060.
The authors also acknowledge support of the bilateral DAAD--MZO grant \emph{Multilinear singular integrals and applications}.
The authors would like to thank P.~Friz for turning their attention to the recent literature on martingale rough paths.

\printbibliography

\end{document}